\newtheorem{theorem}{Theorem}[section]
\newtheorem{lemma}[theorem]{Lemma}
\newtheorem{corollary}[theorem]{Corollary}
\theoremstyle{definition}
\newtheorem{definition}[theorem]{Definition}
\theoremstyle{remark}
\newtheorem{remark}[theorem]{Remark}
\numberwithin{equation}{section}
\begin{document}
\title[On a class of complete non-compact gradient Yamabe solitons]
{On a class of complete non-compact gradient Yamabe solitons}
\author{Jia-Yong Wu}
\address{Department of Mathematics, Shanghai Maritime University,
Haigang Avenue 1550, Shanghai 201306, P. R. China}

\email{jywu81@yahoo.com}

\subjclass[2010]{Primary 53C21; Secondary 53C44,
55Q52.}

\date{April 6, 2017}

\dedicatory{}
\keywords{Yamabe flow, Yamabe soliton, self-similar solution, finite topological type.}
\begin{abstract}
We derive lower bounds of the scalar curvature on complete non-compact gradient
Yamabe solitons under some integral curvature conditions. Based on this, we prove
that potential functions of Yamabe solitons have at most quadratic growth for
distance function. We also obtain a finite topological type property on complete
shrinking gradient Yamabe solitons under suitable scalar curvature assumptions.
\end{abstract}
\maketitle
\section{Introduction and results}\label{sec1}
In this note we will study some scalar curvature and geometrical topology properties
on a large class of complete non-compact gradient Yamabe solitons.

\begin{definition}
Let $(M^n,g)$ be an $n$-dimensional complete Riemannian manifold and let $R$ denotes
the scalar curvature of $(M^n,g)$. It is said to be a \emph{complete non-compact
(resp. closed) Yamabe soliton} if manifold $(M^n,g)$ is complete non-compact (resp.
closed) and if it admits a smooth vector field $X$ satisfying
\begin{equation}\label{def1}
R g+\mathcal {L}_X g=\lambda g
\end{equation}
for some real constant $\lambda$, where $\mathcal {L}_X$ denotes the
Lie derivative with respect to the smooth vector $X$. A Yamabe
soliton is said to be \emph{shrinking}, \emph{steady} or
\emph{expanding} if $\lambda>0$, $\lambda=0$ or $\lambda<0$,
respectively.
\end{definition}
The vector field $X$ appearing in \eqref{def1} has necessarily to be
a conformal Killing vector field since the Yamabe flow
\begin{equation}\label{Yama}
\frac{\partial}{\partial t}g=-Rg,
\end{equation}
preserves the conformal structure. If $2X=\nabla f$ for some
smooth function $f$ on $M^n$, then we have a \emph{complete gradient
Yamabe soliton}
\begin{equation}\label{def2}
R g_{ij}+\nabla_i \nabla_j f=\lambda g_{ij},
\end{equation}
where function $f$ is often called the \emph{potential function}. When
$f$ is constant, the scalar curvature of Yamabe solitons becomes constant.

The Yamabe soliton is an important object in understanding the Yamabe flow, since
it can be regarded as the special solution of the Yamabe flow and naturally
arises as the limit of dilations of singularities in the Yamabe flow. Meanwhile,
the Yamabe soliton has inspired differential Harnack inequalities of the
Yamabe flow (see \cite{[Chow]}). The knowing the geometry of gradient Yamabe
solitons helps us to understand the geometry of singularities in the Yamabe flow.
It is known that \emph{any closed Yamabe soliton has constant scalar curvature}
(see \cite{[CD]} or \cite{[CLN]}, Appendix B). For the non-compact case, many
interesting results about non-compact Yamabe solitons have been studied in recent
papers, such as \cite{[CSZ]}, \cite{[CMM]}, \cite{[DaSe]}, \cite{[Ma-Ch]} and
\cite{[Ma-Vi]}.

On complete (closed or non-compact) Ricci solitons, B.-L. Chen \cite{[Chen]}
(or see \cite{[Zhang]}) obtained a uniform lower bound of scalar curvature
by using the cut-off function techinque in a local neighborhood of manifold.
Motivated by this result, it is natural to ask whether the scalar curvature
of complete non-compact Yamabe solitons shares a similar property? In general,
it seems to be difficult to answer this question. However, if some assumptions
are given, we can give some partial answer.
\begin{theorem}\label{bound}
Let $(M^n,g, f)$ be a complete non-compact gradient
Yamabe soliton of the form \eqref{def2}, whose Ricci curvature $Rc$ satisfies
\begin{equation}\label{qulvcon}
\lim_{r(x)\to\infty}\frac {1}{r(x)}\cdot\int^{r(x)}_1Rc(\gamma'(s),\gamma'(s))ds\geq0,
\end{equation}
where $r(x)$ is the distance function from
a fixed point $p\in M^n$ and $\gamma:[0,r(x)]\to
M^n$ is a unit speed minimal geodesic joining $p$ to $x$.
\begin{enumerate}
\item[(1)] If the gradient Yamabe soliton is  steady or shrinking,
then $R\geq0$.

\item[(2)] If the gradient Yamabe soliton is expanding,
then $R\geq\lambda$.
\end{enumerate}
\end{theorem}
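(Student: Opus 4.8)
The plan is to extract from the soliton equation a pointwise elliptic relation that forces the desired sign on $R$ wherever a minimum is (nearly) attained, and then to use the integral curvature hypothesis \eqref{qulvcon} to push this interior argument out to infinity. First I would record the algebraic consequences of \eqref{def2}. Tracing \eqref{def2} gives $nR+\Delta f=n\lambda$, i.e. $\Delta f=n(\lambda-R)$, while contracting \eqref{def2} with the unit tangent $\gamma'$ of a minimal geodesic in both indices yields $\mathrm{Hess}\,f(\gamma',\gamma')=\lambda-R$ along $\gamma$, that is, $(f\circ\gamma)''(s)=\lambda-R(\gamma(s))$. Taking the divergence of \eqref{def2} and using $\nabla^i(Rg_{ij})=\nabla_jR$, the Bochner commutation $\nabla^i\nabla_i\nabla_jf=\nabla_j\Delta f+R_{jk}\nabla^kf$, and $\nabla_j\Delta f=-n\nabla_jR$, I obtain the fundamental identity $R_{jk}\nabla^kf=(n-1)\nabla_jR$, i.e. $\mathrm{Ric}(\nabla f)=(n-1)\nabla R$.

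Taking one further divergence of this identity and invoking the contracted second Bianchi identity $\nabla^jR_{jk}=\tfrac12\nabla_kR$ together with $R_{jk}\nabla^j\nabla^kf=(\lambda-R)R$ (which follows from $\mathrm{Hess}\,f=(\lambda-R)g$), I would arrive at the elliptic equation
\[
(n-1)\Delta R-\tfrac12\langle\nabla f,\nabla R\rangle=R(\lambda-R).
\]
This is the heart of the matter. At any interior point where $R$ attains a local minimum one has $\nabla R=0$ and $\Delta R\ge0$, so the right-hand side is nonnegative, $R(\lambda-R)\ge0$, equivalently $R(R-\lambda)\le0$; hence $R$ lies between $0$ and $\lambda$. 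This gives $R\ge0$ when $\lambda\ge0$ (steady or shrinking) and $R\ge\lambda$ when $\lambda<0$ (expanding). Thus on a closed manifold both conclusions are immediate, and the entire difficulty is that $M^n$ is non-compact, so $R$ need not attain its infimum.

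To handle this I would run a maximum-principle argument at infinity, following the scheme of Zhang in the Ricci soliton case. Setting $m=\inf_M R$ and assuming $m<\min\{0,\lambda\}$, I would examine $R$ along minimal geodesics $\gamma$ from the base point $p$ toward an almost-minimizing sequence, converting the radial identity $(f\circ\gamma)''=\lambda-R$ and the hypothesis \eqref{qulvcon} into control on the growth of $(f\circ\gamma)'=\langle\nabla f,\gamma'\rangle$, and thereby on the first-order drift term $\langle\nabla f,\nabla R\rangle$ appearing in the displayed equation. The role of \eqref{qulvcon} is precisely to prevent the potential function from dragging the infimum of $R$ off to infinity with an uncontrolled gradient, so that either an Omori--Yau type weak maximum principle or a cutoff-function estimate in the spirit of Chen and Zhang returns $R(\lambda-R)\ge0$ in the limit and yields the stated bounds.

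The step I expect to be the main obstacle is exactly this passage from the interior maximum principle to infinity. Unlike the Ricci soliton case, where scalar curvature control is built into the geometry, here one must verify that the radial-average condition \eqref{qulvcon} simultaneously tames the drift $\langle\nabla f,\nabla R\rangle$ and the possible degeneration of the metric near infinity, ruling out a sequence along which $R$ stays below $\min\{0,\lambda\}$ while the accompanying gradient and Laplacian terms fail to have the signs supplied by a genuine interior minimum. Making this limiting argument rigorous, rather than the formal computation leading to the elliptic equation, is where the substantive work lies.
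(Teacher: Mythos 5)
Your two identities are exactly the paper's Lemma \ref{identity}: the divergence computation gives $(n-1)\nabla_jR=R_{jk}\nabla^kf$ and a second divergence gives $(n-1)\Delta R-\tfrac12\langle\nabla f,\nabla R\rangle=R(\lambda-R)$, and your observation that an attained interior minimum of $R$ forces $R(\lambda-R)\ge0$ there is the correct starting point. But the proof stops where the theorem actually lives: you explicitly defer ``the passage from the interior maximum principle to infinity,'' and that passage is the entire content of the paper's argument. As written, the proposal is a plan with the decisive step missing, and the step cannot be waved through. An Omori--Yau principle is not available here --- hypothesis \eqref{qulvcon} is only an averaged integral condition on $\mathrm{Ric}$ along minimal geodesics and does not give the pointwise lower Ricci bound (or stochastic completeness) that such principles require --- so the cutoff route is forced, and it requires concrete estimates you have not supplied.

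Specifically, the paper sets $\Phi=\varphi R$ with $\varphi=\tilde\varphi(r(x)/c)$, and at the negative minimum $x_0$ of $\Phi$ the first-order drift $\langle\nabla f,\nabla R\rangle$ is absorbed into $\nabla\Phi(x_0)=0$; what remains to be controlled is the single quantity $\Delta r(x_0)-\frac{\langle\nabla r,\nabla f\rangle(x_0)}{2(n-1)}$. The term $\Delta r(x_0)$ is bounded by the second variation of arc length, which is precisely where $\int_1^{r(x_0)}Rc(\gamma',\gamma')\,ds$ appears and where \eqref{qulvcon} is used (divided by $c\le r(x_0)$, so the averaged condition is exactly what is needed); the term $\langle\nabla r,\nabla f\rangle(x_0)$ is bounded by integrating $\nabla\nabla f(\gamma',\gamma')=\lambda-R$ along $\gamma$ together with the running assumption $R\le0$. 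None of this appears in your write-up, and your description of \eqref{qulvcon} as ``taming the drift $\langle\nabla f,\nabla R\rangle$'' mislocates its role. Finally, in the expanding case the generic cutoff argument you gesture at only yields $R\ge2\lambda$, not the stated $R\ge\lambda$; the paper must construct a special piecewise-linear cutoff (following S.-J.~Zhang) and estimate $h_\varphi(s)=\tfrac12\varphi'(s)s-\varphi(s)\ge-1-\tfrac{1+\epsilon}{b}$ to sharpen the constant. Your proposal gives no mechanism for recovering the sharp constant, so even granting the limiting argument in outline, part (2) would not follow.
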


\begin{remark}
From Section \ref{example} below, we see that if $(M,g,f)$ are
Einstein solitons with non-negative constant curavture,
or Gaussian Yamabe solitons, Theorem \ref{bound} obviously holds.
If $(M,g,f)$ is 2-dimensional steady cigar Yamabe soliton,
we can compute that $Rc=\frac{2}{1+r^2}g$ and hence
\eqref{qulvcon} holds and $R=\frac{4}{1+r^2}>0$.
\end{remark}

\begin{remark}
Since the curvature assumption in Theorem \ref{bound} obviously exclude
Einstein solitons with negative constant curvature. Can one remove or
weaken the assumption of the theorem?
\end{remark}

\begin{remark}
Recently L. Ma and V. Miquel have also proved similar scalar curvature
estimates under different curvature assumptions
(Theorems 1 and 2 in \cite{[Ma-Vi]}).
\end{remark}

In addition, as we all known, potential functions of complete gradient Ricci solitons
have at most quadratic growth for the distance function (see, e.g., \cite{[CaoZhou]}
or \cite{[CHD3]}). Inspired by the Ricci solitons case, based on Theorem
\ref{bound}, we can also derive similar results for potential functions
of complete gradient Yamabe solitons.
\begin{theorem}\label{bound2}
Under the same hypotheses of Theorem \ref{bound}, we have
\begin{enumerate}
  \item $f(x)\leq
\frac{\lambda}{2}\cdot r(x)^2+C_1\cdot r(x)+C_2$
  \quad\,$\mathrm{if}$ $\lambda\geq0$;

  \item $f(x)<
C_1\cdot r(x)+C_2$ \quad\quad\quad\quad\quad\quad$\mathrm{if}$ $\lambda<0$
\end{enumerate}
for some fixed constants $C_1$ and $C_2$. Here
$r(x):=d(p,x)$ is the distance function from some fixed point $p\in
M^n$.
\end{theorem}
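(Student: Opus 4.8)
The plan is to reduce the whole estimate to a one-dimensional second-order differential inequality along minimal geodesics, exploiting the fact that for a gradient Yamabe soliton the Hessian of $f$ is a pure scalar multiple of the metric. Fix $x\in M^n$ and let $\gamma:[0,r(x)]\to M^n$ be a unit speed minimal geodesic from $p$ to $x$, so that $\gamma(0)=p$ and $\gamma(r(x))=x$. Set $\phi(s):=f(\gamma(s))$. Since $\nabla_{\gamma'}\gamma'=0$ and $|\gamma'(s)|=1$, differentiating twice and rewriting the soliton equation \eqref{def2} as $\nabla_i\nabla_j f=(\lambda-R)g_{ij}$ I would obtain the key identity
$$\phi''(s)=\nabla^2 f(\gamma'(s),\gamma'(s))=\lambda-R(\gamma(s)).$$
This is the decisive simplification: because the Hessian is concircular (proportional to $g$), the scalar curvature enters directly, and there is no need to control a Ricci component as in the Ricci soliton case.

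Next I would feed in the lower scalar-curvature bounds furnished by Theorem \ref{bound}. In the steady or shrinking case $\lambda\ge0$ one has $R\ge0$, hence $\phi''(s)\le\lambda$; integrating once gives $\phi'(s)\le\phi'(0)+\lambda s$, and integrating again over $[0,r(x)]$ gives $\phi(r(x))\le\phi(0)+\phi'(0)\,r(x)+\tfrac{\lambda}{2}r(x)^2$. In the expanding case $\lambda<0$ one has $R\ge\lambda$, hence $\phi''(s)\le0$, so $\phi$ is concave and $\phi(r(x))\le\phi(0)+\phi'(0)\,r(x)$. It then remains only to absorb the boundary data into constants independent of $x$: since $\phi(0)=f(p)$ and, by Cauchy--Schwarz, $\phi'(0)=\langle\nabla f(p),\gamma'(0)\rangle\le|\nabla f(p)|$, I would set $C_2:=f(p)$ and $C_1:=|\nabla f(p)|$, both of which depend only on the fixed base point $p$. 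Substituting yields $f(x)\le\frac{\lambda}{2}r(x)^2+C_1 r(x)+C_2$ when $\lambda\ge0$ and $f(x)\le C_1 r(x)+C_2$ when $\lambda<0$, which are the two asserted bounds.

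I expect the argument to be essentially mechanical once Theorem \ref{bound} is available, with no serious analytic obstacle, precisely because the concircular identity $\nabla^2 f=(\lambda-R)g$ converts the problem into an elementary ODE comparison. The one point deserving genuine care is the \emph{strict} inequality claimed in the expanding case: concavity only gives $\phi(r(x))\le\phi(0)+\phi'(0)\,r(x)$, with equality forcing $\phi''\equiv0$, i.e. $R\equiv\lambda$ all along $\gamma$. Thus the strictness must be justified by ruling out that a geodesic issuing from $p$ has $R\equiv\lambda$ (equivalently $\nabla^2 f\equiv0$) along its entire length, and this is the only place where a little additional justification beyond the two integrations is required.
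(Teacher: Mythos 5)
Your proposal is correct and follows essentially the same route as the paper: both integrate the identity $\nabla^2 f=(\lambda-R)g$ twice along a unit speed minimal geodesic from $p$, using the lower bounds $R\geq0$ (resp. $R\geq\lambda$) from Theorem \ref{bound} to get $\phi''\leq\lambda$ (resp. $\phi''\leq0$), with $C_1=|\nabla f|(p)$ and $C_2=f(p)$. Your worry about the strict inequality in the expanding case is unnecessary, since the claim is only for \emph{some} constants $C_1,C_2$ and one may simply replace $C_2$ by $C_2+1$; indeed the paper's own proof also only establishes the non-strict inequality.
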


In the following we obtain a topological result on complete shrinking
gradient Yamabe solitons with a proper upper bound on scalar curvature.
\begin{theorem}\label{topo}
Let $(M^n,g)$ be a complete Riemannian manifold satisfying
\[
Rg_{ij}+\nabla_i \nabla_j f\geq \lambda g_{ij},
\]
where $f\in C^\infty(M^n)$ and $\lambda>0$. Assume that the scalar curvature
$R\leq \mu$ for some constant $\mu<\lambda$. Then $M^n$ has finite topological type.
\end{theorem}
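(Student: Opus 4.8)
The plan is to exploit the fact that the two hypotheses together force the potential $f$ to be uniformly convex, and then to run Morse theory on $f$. First I would rewrite the assumption $Rg_{ij}+\nabla_i\nabla_j f\geq \lambda g_{ij}$ as $\nabla_i\nabla_j f\geq(\lambda-R)g_{ij}$ and combine it with $R\leq\mu<\lambda$ to obtain the pointwise bound $\mathrm{Hess}\,f\geq(\lambda-\mu)g$ with $\lambda-\mu>0$. Thus $f$ is \emph{strongly convex}: its Hessian is positive definite everywhere and bounded below by a fixed positive multiple of the metric.

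Next I would show that $f$ is a proper exhaustion function that is bounded below. Fix $p\in M^n$ and let $\gamma$ be any unit-speed geodesic with $\gamma(0)=p$. The function $\varphi(t):=f(\gamma(t))$ satisfies $\varphi''(t)=\mathrm{Hess}\,f(\gamma',\gamma')\geq\lambda-\mu$, so by integrating twice, $\varphi(t)\geq f(p)-|\nabla f(p)|\,t+\tfrac{\lambda-\mu}{2}\,t^2$. By completeness and Hopf--Rinow every $x$ is reached by a minimal geodesic from $p$, whence $f(x)\geq f(p)-|\nabla f(p)|\,d(p,x)+\tfrac{\lambda-\mu}{2}\,d(p,x)^2$. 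The right-hand side is bounded below (by completing the square) and tends to $+\infty$ as $d(p,x)\to\infty$, so $f$ is simultaneously bounded below and proper.

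I would then pin down the critical set of $f$. Being proper and bounded below, $f$ attains its minimum at some $p_0$, giving $\nabla f(p_0)=0$; this produces at least one critical point. For uniqueness, if $p_1\neq p_2$ were both critical, the restriction of $f$ to a minimal geodesic joining them would be a strictly convex function of one variable whose derivative vanishes at both endpoints, which is impossible since that derivative is strictly increasing. Hence $f$ has exactly one critical point, and because $\mathrm{Hess}\,f$ is positive definite there, it is a non-degenerate minimum. Thus $f$ is a proper Morse function with a single critical point of index $0$.

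Finally, I would invoke Morse theory for the proper function $f$: near the minimum the sublevel set $\{f\leq t\}$ is diffeomorphic to a ball by the Morse lemma, and since there are no further critical values, the gradient flow shows $\{f\leq t\}$ remains diffeomorphic to a ball as $t$ increases, while properness guarantees these sets exhaust $M^n$ as $t\to\infty$. Consequently $M^n$ is diffeomorphic to $\mathbb{R}^n$, which is considerably stronger than finite topological type and in particular implies it. The main obstacle is making the global convexity step fully rigorous, namely securing a uniform lower bound for $f$ over the whole manifold and using completeness correctly, together with checking that Morse theory applies in the non-compact setting; it is precisely the properness of $f$, established in the second step, that makes this last point legitimate.
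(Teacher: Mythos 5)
Your proposal is correct and follows essentially the same route as the paper: both integrate the pointwise bound $\nabla_i\nabla_j f\geq(\lambda-\mu)g_{ij}$ twice along minimal geodesics from a fixed point to obtain the quadratic lower bound $f(x)\geq f(p)+\tfrac{\lambda-\mu}{2}r(x)^2-C\,r(x)$, hence properness of $f$ and the absence of critical points outside a compact set, and then conclude by Morse theory. The only difference is that you push the strong convexity slightly further to get a unique nondegenerate minimum and the sharper conclusion that $M^n$ is diffeomorphic to $\mathbb{R}^n$, which of course implies finite topological type.
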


In particular, Theorem \ref{topo} implies
\begin{corollary}\label{cortop}
Any complete shrinking gradient Yamabe soliton of the form \eqref{def2} with
$R\leq \mu$ for some constant $\mu<\lambda$ has finite topological type.
\end{corollary}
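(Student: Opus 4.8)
The plan is to deduce the corollary as an immediate specialization of Theorem \ref{topo}, since every complete shrinking gradient Yamabe soliton automatically satisfies the hypotheses of that theorem. First I would observe that, by definition, a shrinking gradient Yamabe soliton of the form \eqref{def2} obeys the exact identity
\[
R g_{ij} + \nabla_i \nabla_j f = \lambda g_{ij}
\]
with $\lambda > 0$. In particular this equality trivially implies the inequality $R g_{ij} + \nabla_i \nabla_j f \geq \lambda g_{ij}$ required in Theorem \ref{topo}, and $f$ is smooth by assumption. The only remaining hypothesis is the scalar curvature bound $R \leq \mu$ for some $\mu < \lambda$, which is precisely what the corollary assumes. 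Thus all conditions of Theorem \ref{topo} hold verbatim, and invoking that theorem yields at once that $M^n$ has finite topological type.

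There is essentially no obstacle at the level of the corollary itself: the entire analytic content has already been absorbed into Theorem \ref{topo}. It is nonetheless worth recording where that content lives, since it explains why the curvature bound $\mu < \lambda$ is the natural hypothesis. Rewriting the soliton identity as $\nabla_i \nabla_j f = (\lambda - R) g_{ij} \geq (\lambda - \mu) g_{ij}$ shows that the Hessian of $f$ is bounded below by the positive constant $\lambda - \mu$ times the metric. This makes $f$ a uniformly convex, and hence proper, exhaustion function on the complete manifold, so that a Morse-theoretic argument (the one carried out in the proof of Theorem \ref{topo}) controls the critical set of $f$ and forces the finiteness of the topology. The corollary then follows simply by checking that a shrinking soliton feeds exactly these data into that machinery.
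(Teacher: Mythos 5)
Your proposal is correct and matches the paper exactly: the paper presents Corollary \ref{cortop} as an immediate specialization of Theorem \ref{topo}, obtained by noting that the soliton equation \eqref{def2} with $\lambda>0$ gives the required inequality $Rg_{ij}+\nabla_i\nabla_j f\geq\lambda g_{ij}$ verbatim. Your additional remarks on why $\mu<\lambda$ forces uniform convexity and properness of $f$ accurately summarize the content of the paper's proof of Theorem \ref{topo}.
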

\begin{remark}
Fang, Man and Zhang \cite{[FMZ]} proved that any
complete gradient shrinking Ricci soliton with bounded scalar
curvature has finite topological type.
\end{remark}

The structure of this paper is organized as follows. In Section
\ref{example}, we will give some simple examples of complete non-compact
Yamabe solitons. In Section \ref{bdsc}, we will employ the classical cut-off
technique to prove Theorem \ref{bound}. In Section \ref{potenfunc},
we apply Theorem \ref{bound} to prove Theorem \ref{bound2}. Finally, in
Section \ref{bdsc2}, we will prove Theorem \ref{topo} and Corollary \ref{cortop}.

\vspace{.1in}

\textbf{Acknowledgement}.
The preliminary version of the work was finished in 2011 (see arXiv:1109.0861).
For some reason, the author didn't submit it at that time. This work is
supported by Natural Science Foundation of Shanghai (17ZR1412800)
and National Natural Science Foundation of China (11671141).

\section{Examples}\label{example}

In this section, we give some simple examples of complete non-compact
Yamabe solitons. It is useful to consider these examples as models to
guide one's intuition about the structure of Yamabe solitons.

\vspace{0.3em}

\textbf{$\bullet$ Einstein soliton}

\vspace{0.3em}

Let $(M^n,g)$ be a complete non-compact Riemannian manifold with the
Ricci curvature $R_{ij}=\frac{\lambda}{n} g_{ij}$ and let the
potential function $f$ be a constant. Then we have
\[
R g_{ij}=\lambda g_{ij}.
\]
The above Yamabe soliton equation is said to be shrinking, steady or
expanding if $\lambda>0$, $\lambda=0$ or $\lambda<0$, respectively.

\vspace{0.3em}

\textbf{$\bullet$ Gaussian Yamabe soliton}

\vspace{0.3em}

The Gaussian Yamabe soliton on $\mathbb{R}^n$ is given by
\[
g_{ij}=\delta_{ij}\quad \mathrm{and}\quad f(x)=\frac
\lambda2|x|^2.
\]
Then we have $R=0$ and $\nabla_i\nabla_j f=\lambda\delta_{ij}$.
Therefore
\[
0=-Rg_{ij}=\nabla_i \nabla_j f-\lambda \delta_{ij}.
\]
Note that if $\lambda>0$ (or $\lambda<0$), then $(\mathbb{R}^n,
\delta_{ij}, \frac \lambda2|x|^2)$ is called the Gaussian shrinking
(or expanding) Yamabe soliton.

\vspace{0.3em}

\textbf{$\bullet$ Cigar Yamabe soliton}

\vspace{0.3em}

Hamilton's cigar soliton \cite{[Hamilton]} (see also \cite{[CHD],[CHD2]} or
Chapter 4 in \cite{[CLN]}) is the complete Riemannian surface, where
\[
g_{\small\Sigma}:=\frac{dx^2+dy^2}{1+x^2+y^2}.
\]
In polar coordinates: $x=\rho\cos\theta$ and $y=\rho\sin\theta$,  we may
rewrite
\[
g_{\Sigma}=\frac{d\rho^2+\rho^2d\theta^2}{1+\rho^2}.
\]
By a direct computation, the scalar curvature of $g_{\Sigma}$ is
$R=\frac{4}{1+\rho^2}$.
Let
\[
f:=-2\log(1+\rho^2).
\]
Then we have that
\[
Rg_{ij}+\nabla_i \nabla_j f=0.
\]
This is the equation of steady gradient Yamabe solitons in dimension two. But for high dimension case, it is not clear whether there are examples of non-compact steady gradient Yamabe solitons with rotationally symmetric metrics.


\section{Bounds of the scalar curvature}\label{bdsc}
Before we prove Theorem \ref{bound}, we first give a
useful lemma on the complete gradient Yamabe solitons.
\begin{lemma}\label{identity}
Let $(M^n, g)$ be a complete (closed or non-compact) Riemannian
manifold satisfying \eqref{def2}. Then we have the following identities
\begin{equation}\label{ident1}
\nabla_i R=\frac {1}{n-1}R_{ij}\nabla_j f
\end{equation}
and
\begin{equation}\label{ident2}
\Delta R-\frac{\langle\nabla R,\nabla f\rangle}{2(n-1)}+\frac
{R^2}{n-1}-\frac{\lambda R}{n-1}=0.
\end{equation}
\end{lemma}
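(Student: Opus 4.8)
The plan is to derive both identities directly from the soliton equation \eqref{def2}, which I rewrite as $\nabla_i\nabla_j f=(\lambda-R)g_{ij}$. Tracing this against $g^{ij}$ immediately gives the auxiliary relation $\Delta f=n(\lambda-R)$, equivalently $\nabla_j(\Delta f)=-n\nabla_j R$, which I will use in both parts. These are purely pointwise statements, so no global or analytic input is needed; completeness plays no role, and the whole proof reduces to tensor bookkeeping.

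To prove \eqref{ident1}, I would take the divergence of the Hessian equation. Applying $\nabla^i$ to $\nabla_i\nabla_j f=(\lambda-R)g_{ij}$ produces $-\nabla_j R$ on the right-hand side, while on the left I invoke the Bochner commutation identity $\Delta(\nabla_j f)=\nabla_j(\Delta f)+R_{jl}\nabla_l f$. Substituting $\nabla_j(\Delta f)=-n\nabla_j R$, the two expressions combine to $-n\nabla_j R+R_{jl}\nabla_l f=-\nabla_j R$, which rearranges to $(n-1)\nabla_j R=R_{jl}\nabla_l f$, i.e.\ exactly \eqref{ident1}.

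For \eqref{ident2}, I would take the divergence of \eqref{ident1} in the form $(n-1)\nabla_j R=R_{jl}\nabla_l f$. Applying $\nabla^j$ and expanding the product gives $(n-1)\Delta R=(\nabla^j R_{jl})\nabla_l f+R_{jl}\nabla^j\nabla_l f$. Into the first term I feed the contracted second Bianchi identity $\nabla^j R_{jl}=\tfrac12\nabla_l R$, and into the second I feed the soliton equation again, $\nabla^j\nabla_l f=(\lambda-R)\delta^j_l$, so that $R_{jl}\nabla^j\nabla_l f=(\lambda-R)R$. This yields $(n-1)\Delta R=\tfrac12\langle\nabla R,\nabla f\rangle+\lambda R-R^2$, and dividing by $n-1$ and transposing terms gives \eqref{ident2}.

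The calculations are short, so the only genuine point of care — the main obstacle, such as it is — is sign bookkeeping: I must fix one convention for the Riemann and Ricci tensors and ensure that the Bochner commutation formula and the contracted second Bianchi identity are applied consistently with it, since a sign slip there would corrupt the coefficient $\tfrac1{n-1}$ in \eqref{ident1} and propagate into every term of \eqref{ident2}. Once the conventions are pinned down, both identities follow by direct contraction.
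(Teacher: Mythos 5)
Your proposal is correct and follows essentially the same route as the paper: identity \eqref{ident1} comes from commuting covariant derivatives of the Hessian equation and tracing (you package this as the Bochner commutation formula $\Delta\nabla_j f=\nabla_j\Delta f+R_{jl}\nabla_l f$ applied to the divergence of $\nabla_i\nabla_j f=(\lambda-R)g_{ij}$, whereas the paper antisymmetrizes the third derivatives first and then traces, but these are the same computation), and identity \eqref{ident2} is obtained exactly as in the paper by taking the divergence of \eqref{ident1}, invoking the contracted second Bianchi identity, and substituting the soliton equation. No gaps.
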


\begin{proof}[proof of Lemma \ref{identity}]
Taking covariant derivatives of $Rg_{jk}+\nabla_j\nabla_k
f=\lambda g_{jk}$ yields
\[
\nabla_iRg_{jk}+\nabla_i\nabla_j\nabla_kf=0
\quad \mathrm{and}\quad
\nabla_jRg_{ik}+\nabla_j\nabla_i\nabla_kf=0.
\]
Taking the difference for above two equalities and applying the
commutating formula for covariant derivative, we get
\[
\nabla_iRg_{jk}-\nabla_jRg_{ik}+R_{ijkl}\nabla_l f=0.
\]
Then taking the trace on the indexes $j$ and $k$ for the above equality
gives \eqref{ident1}.

To prove \eqref{ident2}, taking the covariant derivatives to
\eqref{ident1} once again, and using the contracted second Bianchi,
we get
\begin{equation*}
\begin{aligned}
\Delta R&=\frac {1}{n-1}\nabla_i\left(R_{ij}\nabla_j f\right)\\
&=\frac {1}{n-1}\left(\frac 12\nabla_jR\nabla_j f+R_{ij}
\nabla_i\nabla_j f\right)\\
&=\frac {1}{n-1}\left[\frac 12\nabla_jR\nabla_j f+R_{ij}(\lambda
g_{ij}-Rg_{ij})\right].
\end{aligned}
\end{equation*}
This completes the proof of the lemma.
\end{proof}

Next using Lemma \ref{identity}, we can finish the proof of
Theorem \ref{bound}.

\begin{proof}[Proof of Theorem \ref{bound}]
To prove the result, we will apply a local technique of Li-Yau
\cite{[Li-Yau]} to obtain the scalar curvature estimate on
complete non-compact gradient Yamabe solitons.

Take a $C^2$-smooth cut-off function $\tilde{\varphi}$ on $[0,\infty)$
such that $\tilde{\varphi}(s)=1$ for $s\in[0,1]$, $\tilde{\varphi}(s)=0$
for $s\in[2,\infty)$, and $0\leq\tilde{\varphi}(s)\leq1$ for $s\in(1,2)$.
Furthermore, we take the derivatives of $\tilde{\varphi}$, satisfying
\begin{equation}\label{cutoff1}
-C\leq\frac{\tilde{\varphi}'(s)}{\tilde{\varphi}^{1/2}(s)}
\leq0\quad\mathrm{and}\quad\tilde{\varphi}''(s)\geq-C
\end{equation}
for some absolute constant $0<C<\infty$. Fix a point $p\in M$ and let
$r(x):=d(x,p)$ denotes the distance between $x$ and $p$ in $M^n$. Set
\[
\varphi(x)=\tilde{\varphi}\left(\frac{r(x)}{c}\right)
\]
for $c\in(0,\infty)$. Now consider the function $\Phi(x):=\varphi(x){\cdot}R(x)$
supported in $B_p(2c)$. By the argument of Calabi \cite{[Calabi]}
(see also Cheng and Yau \cite{[Cheng-Yau]}), without loss of generality
we assume $\Phi(x) \in C^2(M)$. Direct calculation yields
\[
\Delta \Phi=\varphi\cdot\Delta R+\frac
2c\tilde{\varphi}'\langle\nabla r,\nabla
R\rangle+\left(\frac{\tilde{\varphi}'}{c}\Delta
r+\frac{\tilde{\varphi}''}{c^2}\right)\cdot R.
\]
in $B_p(2c)$. Substituting  \eqref{ident2} into the right hand side
of the above equality,
\begin{equation*}
\begin{aligned}
\Delta \Phi&=\varphi\cdot\left[\frac{\langle\nabla R,\nabla
f\rangle}{2(n-1)}-\frac {R^2}{n-1}+\frac{\lambda R}{n-1}\right]
+\frac 2c\tilde{\varphi}'\langle\nabla r,\nabla R\rangle
+\left(\frac{\tilde{\varphi}'}{c}\Delta r
+\frac{\tilde{\varphi}''}{c^2}\right)\cdot R\\
&=\frac{\langle\nabla\Phi,\nabla f\rangle}{2(n-1)}
+\frac2c\cdot\frac{\tilde{\varphi}'}{\tilde{\varphi}}
\langle\nabla\Phi,\nabla r\rangle
+\frac{\tilde{\varphi}'}{c}\left[\Delta r-\frac{\langle\nabla
r,\nabla f\rangle}{2(n-1)}\right]\cdot R\\
&\,\,\,\,\,\,+\varphi\cdot\left(\frac{\lambda R}{n-1}-\frac
{R^2}{n-1}\right)+\frac{1}{c^2}\left[\tilde{\varphi}''-2\cdot
\frac{(\tilde{\varphi}')^2}{\tilde{\varphi}}\right]\cdot R
\end{aligned}
\end{equation*}
in $B_p(2c)$.

\textbf{Step 1:} If $R\geq 0$, then theorem follows.

\textbf{Step 2:} In the rest of proof, we only consider
the case: $R\leq 0$. Now we may assume that $\min_{x\in M}\Phi(x)<0$.
Then there exists a point
$x_0\in B_p(2c)$, such that
\[
\Phi(x_0)=\min_M\Phi(x)<0.
\]
This implies $R(x_0)<0$ and hence
\[
\nabla\Phi(x_0)=0\quad\mathrm{and}\quad\Delta\Phi(x_0)\geq0.
\]
Therefore at the point $x_0$, we have
\begin{equation}\label{keyineq}
\frac{\tilde{\varphi}'}{c}\cdot\left[\Delta r-\frac{\langle\nabla
r,\nabla f\rangle}{2(n-1)}\right]
+\varphi\cdot\left(\frac{\lambda}{n-1}-\frac
{R}{n-1}\right)+\frac{1}{c^2}\left[\tilde{\varphi}''-2\cdot
\frac{(\tilde{\varphi}')^2}{\tilde{\varphi}}\right]\leq0.
\end{equation}

\vspace{.1in}

Next we shall consider the nonexpanding and expanding cases
separately to study the key inequality \eqref{keyineq}.

\vspace{0.5em}

\noindent\textbf{(1) Nonexpanding gradient Yamabe soliton}

\vspace{0.5em}

For shrinking or steady gradient Yamabe solitons, i.e.,
$\lambda\geq0$, we consider two cases, depending on the location of
$x_0$.

\vspace{0.25em}

\textbf{Case (i)} Suppose $d (x_0,p)<c$, so that $\varphi=1$ in a
neighborhood of $x_0$. At this time $\tilde{\varphi}'=0$. Then
\eqref{keyineq} implies
\begin{equation*}
\begin{aligned}
0&\geq\lambda-R(x_0)
=-\Phi(x_0)+\lambda
\geq-\tilde{\varphi}\left(\frac{r(x)}{c}\right)\cdot R(x)+\lambda
\end{aligned}
\end{equation*}
for all $x\in M$. From above, we have
\[
R(x)\geq\lambda
\]
for all $x\in B_p(c)$, since
$\tilde{\varphi}\left(\frac{r(x)}{c}\right)=1$.

\vspace{0.5em}

\textbf{Case (ii)} Suppose $d (x_0,p)\geq c$. We still consider
\eqref{keyineq}. Below we want to get an
upper bound of $\Delta r-\frac{\langle\nabla r,\nabla
f\rangle}{2(n-1)}$. On one hand, by the second variation of distance,
\begin{equation}\label{vari}
\Delta r(x_0)\leq\int^{r(x_0)}_0\left[(n-1)(\eta'(s))^2
-\eta^2Rc(\gamma'(s),\gamma'(s))\right]ds
\end{equation}
for any unit minimal geodesic $\gamma:[0,r(x_0)]\to
M^n$ joining $p$ to $x_0$ and any piecewise $C^\infty$
function $\eta:[0,r(x_0)]\to [0, 1]$ satisfying $\eta(0)=0$ and
$\eta(r(x_0))=1$. On the other hand,
\begin{equation*}
\begin{aligned}
-\frac{\langle\nabla r,\nabla
f\rangle(x_0)}{2(n-1)}&=-\frac{\langle\nabla r,\nabla
f\rangle(p)}{2(n-1)}-\frac{1}{2(n-1)}\int^{r(x_0)}_0\kern-5pt
\gamma'(s)\langle\nabla f, \nabla r\rangle ds\\
&=-\frac{\langle\nabla r,\nabla
f\rangle(p)}{2(n-1)}-\frac{1}{2(n-1)}\int^{r(x_0)}_0\kern-5pt\nabla\nabla
f(\gamma'(s),\gamma'(s))ds\\
&=-\frac{\langle\nabla r,\nabla f\rangle(p)}{2(n-1)}-\frac{\lambda
r(x_0)}{2(n-1)}+\frac{1}{2(n-1)}\int^{r(x_0)}_0Rg
(\gamma'(s),\gamma'(s))ds,
\end{aligned}
\end{equation*}
where we used $\nabla r=\gamma'(s)$, $\nabla_{\gamma'(s)}\nabla r=0$
and \eqref{def2}. Combining this with \eqref{vari} and using
our assumption $R\leq0$ yields
\begin{equation}\label{vari4}
\begin{aligned}
\Delta r(x_0)-\frac{\langle\nabla r,\nabla f\rangle(x_0)}{2(n-1)}
&\leq(n-1)\int^{r(x_0)}_0(\eta'(s))^2ds-\frac{\langle\nabla r,\nabla
f\rangle(p)}{2(n-1)}\\
&\quad-\frac{\lambda r(x_0)}{2(n-1)}
-\int^{r(x_0)}_0\eta^2\cdot Rc(\gamma'(s),\gamma'(s))ds.
\end{aligned}
\end{equation}
To simply this inequality, we define the function $\eta(s)$
\begin{equation*}
\eta(s)=\left\{ \begin{aligned}
s\quad &&\mathrm{on}\,\, &[0,1], \\
1\quad &&\mathrm{on}\,\, &(1,r(x_0)].
\end{aligned} \right.
\end{equation*}
Plugging this cut-off function into \eqref{vari4} gives
\begin{equation}\label{vari5}
\begin{aligned}
&\Delta r(x_0)-\frac{\langle\nabla r,\nabla f\rangle(x_0)}{2(n-1)}
\leq(n-1)-\frac{\langle\nabla r,\nabla
f\rangle(p)}{2(n-1)}-\frac{\lambda r(x_0)}{2(n-1)}\\
&-\int^1_0s^2{\cdot}Rc(\gamma'(s),\gamma'(s))ds
-\int^{r(x_0)}_1Rc(\gamma'(s),\gamma'(s))ds\\
&\leq(n-1)-\frac{\langle\nabla r,\nabla
f\rangle(p)}{2(n-1)}-\frac{\lambda r(x_0)}{2(n-1)}
+C_0-\int^{r(x_0)}_1Rc(\gamma'(s),\gamma'(s))ds,
\end{aligned}
\end{equation}
where $C_0$ is some positive absolute constant.
Note that \eqref{cutoff1} implies
\[
\tilde{\varphi}''-2\cdot
\frac{(\tilde{\varphi}')^2}{\tilde{\varphi}}\geq-C,
\]
where $C$ is some positive absolute  constant and independent of $c$.
Substituting this inequality and \eqref{vari5}
into \eqref{keyineq}, we obtain\kern-12pt
\begin{equation*}
\begin{aligned}
\frac{\Phi(x)}{n-1}&\geq\frac{\Phi(x_0)}{n-1}\\
&\geq\frac{\tilde{\varphi}'(\frac{r(x_0)}{c})}{c}\Bigg[
(n-1)-\frac{\langle\nabla r,\nabla f\rangle(p)}{2(n-1)}
-\frac{\lambda r(x_0)}{2(n-1)}+C_0\\
&\quad\quad\quad\quad\quad\quad-\int^{r(x_0)}_1Rc(\gamma'(s),\gamma'(s))ds\Bigg]
+\frac{\lambda\varphi(x_0)}{n-1}-\frac{C}{c^2}.
\end{aligned}
\end{equation*}
Hence for all $x\in B_p(c)$,
\begin{equation}
\begin{aligned}\label{shizhi2}
\frac{R(x)}{n-1}&\geq\frac{\tilde{\varphi}'(\frac{r(x_0)}{c})}{c}
\Bigg[
(n-1)-\frac{\langle\nabla r,\nabla f\rangle(p)}{2(n-1)}
-\frac{\lambda r(x_0)}{2(n-1)}+C_0\\
&\quad\quad\quad\quad\quad\quad-\int^{r(x_0)}_1Rc(\gamma'(s),\gamma'(s))ds\Bigg]
+\frac{\lambda\varphi(x_0)}{n-1}-\frac{C}{c^2}.
\end{aligned}
\end{equation}
Note that from $d(x_0,p)\geq c$ and $\tilde{\varphi}(\frac{r(x_0)}{c})>0$,
we can judge that
\[
x_0\in B(p,2c)-B(p,c) \quad \mathrm{and}  \quad  1\leq\frac{r(x_0)}{c}<2.
\]
Since $-1-\epsilon\leq\tilde{\varphi}'\leq 0$ for any $\epsilon>0$
and $\lambda\geq 0$, \eqref{shizhi2} reduces to
\begin{equation}
\begin{aligned}\label{shizhi3}
\frac{R(x)}{n-1}&\geq-\frac{1+\epsilon}{c}\left[(n-1)+\frac{|\nabla
f|(p)}{2(n-1)}+C_0\right]-\frac{C}{c^2}\\
&\quad-\frac{\tilde{\varphi}'(\frac{r(x_0)}{c})}{c}
\cdot\int^{r(x_0)}_1Rc(\gamma'(s),\gamma'(s))ds.
\end{aligned}
\end{equation}
Under our assumption \eqref{qulvcon},
then by taking $c\rightarrow\infty$, since
$-1-\epsilon\leq\tilde{\varphi}'\leq 0$ and $c\leq r(x_0)<2c$
and $\lambda\geq 0$, from \eqref{shizhi3} we conclude that
$R(x)\geq 0$
for all $x\in M$ in Case (ii).

On the other hand, recall that Case (i), and we have $R(x)\geq\lambda$
for all $x\in B_p(c)$. By taking $c\to\infty$, we have that
$R(x)\geq\lambda$
for all $x\in M$ in Case (i).

Therefore in any case, as long as $\lambda\geq 0$, we have $R(x)\geq 0$
for all $x\in M$. But at the beginning of Step 2, we assume $R(x)\leq 0$
for all $x\in M$. Hence $R(x)\equiv 0$ for all $x\in M$ in Step 2.

In summary, combining Step 1, we finish the proof of the nonexpanding case.

\vspace{0.5em}

\noindent\textbf{(2) Expanding gradient Yamabe soliton}

\vspace{0.5em}

The proof of this case ($\lambda<0$) is similar to the arguments of
the nonexpanding case. First, we consider \textbf{Case (i)} i.e., $d(x_0,p)<c$. In the
same way, we still have $R(x)\geq\lambda$ for all $x\in B_p(c)$.

Then we consider \textbf{Case (ii)}, i.e., $d(x_0,p)\geq c$. At this time, (\ref{shizhi2})
still holds. Since $x_0\in B(p,2c)-B(p,c)$, we have
\begin{equation}\label{xianzh1f}
1\leq\frac{r(x_0)}{c}<2.
\end{equation}
We also know that $-1-\epsilon\leq\tilde{\varphi}'\leq 0$ and
$\lambda<0$. Combining this with \eqref{xianzh1f} and using the
fact: $\varphi\leq1$, we have by \eqref{shizhi2}
\begin{equation*}
\begin{aligned}
\frac{R(x)}{n-1}&\geq-\frac{1+\epsilon}{c}\Bigg[(n-1)+\frac{|\nabla
f|(p)}{2(n-1)}-\frac{c\lambda}{(n-1)}+C_0\Bigg]+\frac{\lambda}{n-1}-\frac{C}{c^2}\\
&\quad-\frac{\tilde{\varphi}'(\frac{r(x_0)}{c})}{c}
\cdot\int^{r(x_0)}_1Rc(\gamma'(s),\gamma'(s))ds
\end{aligned}
\end{equation*}
for all $x\in B_p(c)$. Noticing the assumption of our theorem \eqref{qulvcon},
and taking first $c\rightarrow\infty\,(c\leq r(x_0)<2c)$ and then
$\epsilon\rightarrow0$, we obtain
\begin{equation}\label{shizhi5}
R(x)\geq 2\lambda
\end{equation}
for all $x\in M$. Note that $R(x)\leq 0$ for all $x\in M$. Hence we conclude that
\[
2\lambda\leq R(x)\leq0
\]
for all $x\in M$ in Step 2.

\vspace{.1in}

In fact the lower bound estimate \eqref{shizhi5} is not optimal. In the following, we want to sharp  the estimate \eqref{shizhi5} for the expanding gradient Yamabe solitons
($\lambda<0$). To achieve it, it needs constructing a suitable $C^2$-smooth cut-off function on $[0,\infty)$. Here we mainly follow the argument of Zhang \cite{[Zhangsj]}. We first construct an non-negative piecewise linear function $\varphi(s)$ such that
\begin{equation}\label{linercut}
\varphi(s)=\left\{ \begin{aligned}
&&1\,\,\,\,\,\,&\mathrm{on}\,\, [0,1), \\
&&\frac{1+b-s}{b}\,\,\,\,\,\,&\mathrm{on}\,\, [1,b+1),\\
&&0\,\,\,\,\,\,&\mathrm{on}\,\, [b+1,\infty),
\end{aligned} \right.
\end{equation}
where $b>2$. It is easy to check that
\[
\varphi''-2\cdot
\frac{(\varphi')^2}{\varphi}\geq -C_1
\]
on $[0,\infty)$ for some positive absolute constant $C_1$. Since $-C_1<\varphi'\leq 0$
for all $x\in B_p(c)$, the inequality \eqref{shizhi2} becomes
\begin{equation}\label{shizhi2k}
\begin{aligned}
R(x)&\geq\frac{-C_1}{c}
\left[(n-1)^2+\frac{|\nabla f|(p)}{2}+(n-1)C_0+\frac{(n-1)C}{cC_1}\right]\\
&\quad-(n-1)\frac{\tilde{\varphi}'(\frac{r(x_0)}{c})}{c}
\int^{r(x_0)}_1Rc(\gamma'(s),\gamma'(s))ds
-\lambda \left(\frac{\varphi'r(x_0)}{2c}-\varphi(x_0)\right),
\end{aligned}
\end{equation}
where $x_0\in B_p((b+1)c)-B_p(c)$. When we let $c\to\infty\,(c\leq r(x_0)<(1+b)c)$,
combing our assumption \eqref{qulvcon}, the first and second terms of the right
hand side of \eqref{shizhi2k} tend to zero. So we only need to estimate the term
\[
\varphi'\left(\frac{r(x_0)}{c}\right)\frac{r(x_0)}{2c}
-\varphi\left(\frac{r(x_0)}{c}\right).
\]
Since $x_0\in B_p((b+1)c)-B_p(c)$, we have
\[
1\leq s_0=\frac{r(x_0)}{c}\leq1+b
\]
on $[1,b+1)$. Hence in fact we only need to estimate the function
$h_{\varphi}(s)$ on $[1,b+1)$, where $h_{\varphi}(s)$ is defined by
\[
h_{\varphi}(s):=\frac 12\varphi'(s)s-\varphi(s).
\]
For the above linear cut-off function in \eqref{linercut}, we see that
\[
h_{\varphi}(s)=-\frac{2b+2-s}{2b}
\]
for $s\in[1,b+1)$. Obviously, $h_{\varphi}(s)\geq-1-1/b$ for $s\in[1,2)$
and $h_{\varphi}(s)\geq-1$ for $s\in[2,b+1)$. Furthermore, for any small
positive number $\epsilon$, we can also construct a $C^2$ cut-off function
$\phi$ by smoothing the linear function $\varphi$ such that
\begin{equation*}
\phi(s)=\left\{ \begin{aligned}
&&\varphi(s)\,\,\,\,\,\,&\mathrm{on}\,\, [0,1)\cup[2,b)\cup[b+1,\infty),\\
&&\phi(s)\,\,\mathrm{satisfies}\,\, -\frac{1+\epsilon}{b}\leq\phi'(s)<0\quad&\mathrm{on}\,\, [1,2)\cup[b,b+1).
\end{aligned} \right.
\end{equation*}
So when $b$ is large and small positive number $\epsilon$,
we have
\[
h_{\phi}(s)\geq-1-\frac{1+\epsilon}{b}
\]
for $s\in[1,b+1)$. Now we regard this function $\phi(s)$ as the desired cut-off function.
Taking $c\to \infty$, $\epsilon\to 0$ and then $b\to \infty$, we get that
\[
h_{\phi}(s)=\frac 12\phi'(s)s-\phi(s)\geq-1
\]
for all $x\in M$. So, by \eqref{shizhi2k} we get $R(x)\geq\lambda$ for
all $x\in M$. Hence $\lambda\leq R(x)\leq0$
for all $x\in M$ in Step 2. At last, Theorem \ref{bound} follows by
combining Step 1.
\end{proof}

\section{Bounds of potential functions}\label{potenfunc}
In this section, we will discuss the potential function $f$ on a class of complete
non-compact Yamabe solitons. We mainly complete the proof of Theorem
\ref{bound2}.

\begin{proof}[Proof of Theorem \ref{bound2}]
(1) Firstly we discuss the steady or shrinking Yamabe gradient solitons, namely,
$\lambda\geq0$. By Theorem \ref{bound},
we have $R\geq0$ and hence
\begin{equation}\label{huajian1}
\nabla_i \nabla_j f=(\lambda-R)g_{ij}\leq \lambda g_{ij}.
\end{equation}
Given any point $x\in M$, let $\gamma:[0,d(x,p)]\rightarrow M$ be a unit speed
shortest geodesic joining $p$ to $x$. Consider the function $f(\gamma(s))$
and then we have for all $\bar{s}\in[0,r(x)]$ that the radial derivative
of $f$ is given by
\begin{equation}\label{radderiva}
\frac{d}{ds}\Big|_{s=\bar{s}}f(\gamma(s))
=\langle\nabla f,\gamma'(\bar{s})\rangle
=\int^{\bar{s}}_0(\nabla\nabla f)(\gamma'(s),\gamma'(s))ds
+\langle\nabla f,\gamma'(0)\rangle.
\end{equation}
Using \eqref{huajian1}, we conclude that
\[
\frac{d}{ds}\Big|_{s=\bar{s}}f(\gamma(s))
\leq\lambda \bar{s}+\langle\nabla f,\gamma'(0)\rangle.
\]
Integrating this along the geodesic $\gamma$, we obtain by taking $\bar{s}=r(x)$
\[
f(x)\leq
\frac{\lambda}{2}\cdot r(x)^2+C_1\cdot r(x)+C_2
\]
for some fixed constants $C_1$ and $C_2$.

(2) Secondly, we discuss the expanding case, i.e., $\lambda<0$. By
Theorem \ref{bound}, we know $R\geq\lambda$.
Hence we get
\[
\nabla_i \nabla_j f=cg_{ij}
\]
for some smooth function $c:=\lambda-R$, where $c\leq0$. Similar
to the computation of \eqref{radderiva}, we prove that
\[
\frac{d}{ds}\Big|_{s=\bar{s}}f(\gamma(s))
\leq0+\langle\nabla
f,\gamma'(0)\rangle.
\]
Integrating this inequality along the geodesic $\gamma$ implies that
\[
f(x)\leq C_1\cdot r(x)+C_2
\]
for some fixed constants $C_1$ and $C_2$. Hence we finish the proof
of Theorem \ref{bound2}.
\end{proof}

\section{The finite topological type}\label{bdsc2}
In this section, we will give a complete proof of the topology result
(Theorem \ref{topo}) on complete gradient Yamabe solitons.

\begin {proof}[Proof of Theorem \ref{topo}]
To prove the desired result, we only need to show that the potential
function $f$ is proper and has no critical points outside of a
compact set.

Now, we let $p\in M$ be a fix point and let $\gamma:[0,s]\rightarrow M$
be a shortest geodesic by arc length, with respect to some fixed
$g(t_0)$ (depend on the constant $\lambda$), jointing $p$ to any
point $x\in M$ (we only care about the point $x$ which are far away
from $p$). Note that $s:=d_{g(t_0)}(p,x)$. Then we have
\begin{equation}
\begin{aligned}\label{proeq1}
\langle\nabla f, \dot{\gamma}\rangle(x)&=\langle\nabla f,
\dot{\gamma}\rangle(p)+\int^s_0\frac{d^2}{ds^2}f(\gamma(s))ds\\
&\geq\langle\nabla f,\dot{\gamma}\rangle(p)+\int^s_0(\lambda-R)
g(\dot{\gamma},\dot{\gamma})ds\\
&\geq\lambda s-|\nabla f|(p)-\int^s_0 R
g(\dot{\gamma},\dot{\gamma})ds.
\end{aligned}
\end{equation}
Since $R\leq \mu$ for some constant $\mu<\lambda$, we have
\[
\int^s_0 R g(\dot{\gamma},\dot{\gamma})ds\leq \mu s.
\]
Plugging this into \eqref{proeq1}, we obtain
\begin{equation}\label{profequ3}
\langle\nabla f, \dot{\gamma}\rangle(x)\geq (\lambda-\mu) s-|\nabla f|(p).
\end{equation}
Noticing that $\lambda-\mu>0$, by \eqref{profequ3}, we get
\[
\frac{d}{ds}f(\gamma(s))=\langle\nabla f, \dot{\gamma}\rangle(x)
\geq(\lambda-\mu) s-|\nabla f|(p)
\]
i.e.,
\[
f(\gamma(s))\geq f(\gamma(0))+\frac {\lambda-\mu}{2}s^2-|\nabla
f|(p)\cdot s.
\]
Let $r(y):=d_{g(t_0)}(p,y)$, so that $s=r(x)$. We have that
\[
f(x)\geq f(p)+\frac{\lambda-\mu}{2}r^2-C_3 r
\]
for any $x\in M$. Obviously, $f^{-1}((-\infty,\tilde{c}])$ is compact for any
$\tilde{c}<\infty$. So $f$ is proper. We also observe that $f$ has no critical
points outside of a large compact domain (for example, we can choose a compact
domain $\bar{B}\left(p, \frac{2|\nabla f|(p)}{\lambda-\mu}\right)$).
\end{proof}

\bibliographystyle{amsplain}

\end{document}